\numberwithin{equation}{section}
\newtheorem{theoremcounter}{theoremcounter}
\theoremstyle{plain}
\newtheorem{corollary}[theoremcounter]{Corollary}
\newtheorem*{lemma}[theoremcounter]{Lemma}
\newtheorem{theorem}[theoremcounter]{Theorem}
\theoremstyle{definition}
\theoremstyle{remark}
\newcommand{\cN}{\ensuremath{\mathcal{N}}}
\newcommand{\rE}{\ensuremath{\mathrm{E}}}
\newcommand{\rL}{\ensuremath{\mathrm{L}}}
\newcommand{\rM}{\ensuremath{\mathrm{M}}}
\newcommand{\rmc}{\ensuremath{\mathrm{c}}}
\newcommand{\rmd}{\ensuremath{\mathrm{d}}}
\newcommand{\vphi}{\ensuremath{\varphi}}
\newcommand{\eqstop}{\ensuremath{\, \text{.}}}
\newcommand{\eqcomma}{\ensuremath{\, \text{,}}}
\newcommand{\CC}{\ensuremath{\mathbb{C}}}
\newcommand{\ra}{\ensuremath{\rightarrow}}
\newcommand{\Cstar}{\ensuremath{\mathrm{C}^*}}
\newcommand{\bo}{\ensuremath{\mathcal{B}}}
\newcommand{\supp}{\ensuremath{\mathop{\mathrm{supp}}}}
\newcommand{\Cstarred}{\ensuremath{\Cstar_\mathrm{red}}}
\newcommand{\cont}{\ensuremath{\mathrm{C}}}
\newcommand{\contc}{\ensuremath{\mathrm{C}_\mathrm{c}}}
\newcommand{\ltwo}{\ensuremath{\ell^2}}
\newcommand{\Rad}{\ensuremath{\operatorname{Rad}}}
\newcommand{\fb}{\ensuremath{\partial_{\mathrm{F}}}}
\newcommand{\redtimes}{\ensuremath{\rtimes_{\mathrm{red}}}}
\begin{document}


\thispagestyle{empty}

\begin{center}
  \begin{minipage}[c]{0.9\linewidth}
    \textbf{\LARGE Traces on reduced group C*-algebras} \\[0.5em]
    by Matthew Kennedy and Sven Raum
  \end{minipage}
\end{center}
  
\vspace{1em}

\renewcommand{\thefootnote}{}
\footnotetext{last modified on \today}
\footnotetext{MK research was supported by NSERC Grant Number 418585}
\footnotetext
{\textit{MSC classification:}
  22D25;
  46L30
}
\footnotetext
{\textit{Keywords:}
  Traces on group \Cstar-algebras,
  Furstenberg boundary
}

\begin{center}
  \begin{minipage}{0.8\linewidth}
    \textbf{Abstract}.
In this short note we prove that the reduced group C*-algebra of a locally compact group admits a non-zero trace if and only if the amenable radical of the group is open. This completely answers a question raised by Forrest, Spronk and Wiersma.
  \end{minipage}
\end{center}


\section*{Introduction}
\label{sec:introduction}

An important fact about the reduced C*-algebra of a discrete group is that it admits at least one non-zero trace. More generally, the reduced C*-algebra of a locally compact group may admit no non-zero traces at all. This is one reason why discrete groups are generally considered to be more tractable in the theory of group C*-algebras.

In a recent preprint, Forrest, Spronk and Wiersma \cite[Question 1.1]{forrestspronkwiersma17} ask for a characterization of the locally compact groups with reduced C*-algebras that admit a non-zero trace. They provide a partial answer to this question by proving that a compactly generated locally compact group $G$ has this property if and only if its amenable radical $\Rad(G)$ is open.

In this note, we completely settle this question by proving that the result of Forrest-Spronk-Wiersma holds without the assumption that the group is compactly generated. Further, we prove that any trace on the reduced C*-algebra concentrates on the amenable radical.

\begin{theorem}
  \label{thm:main}
 Let $G$ be a locally compact group. The reduced C*-algebra $\Cstarred(G)$ admits a non-zero trace if and only if the amenable radical $\Rad(G)$ of $G$ is open. Further, every trace concentrates on $\Rad(G)$, meaning that it factors through the canonical conditional expectation from $\Cstarred(G)$ onto $\Cstarred(\Rad(G))$.
\end{theorem}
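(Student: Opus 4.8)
The plan is to encode a tracial state $\tau$ on $\Cstarred(G)$ by a continuous positive-definite function and reduce the whole statement to showing that this function is supported on $\Rad(G)$. The GNS construction of $\tau$ produces a continuous unitary representation $\pi$ of $G$ — weakly contained in the left regular representation $\lambda$, since it factors through $\Cstarred(G)$ — together with a cyclic vector $\xi$; set $\varphi_\tau(g) = \langle \pi(g)\xi, \xi \rangle$. Then $\varphi_\tau$ is continuous and positive-definite, $\tau(\lambda(f)) = \int_G f\,\varphi_\tau\,d\mu_G$ for $f \in \contc(G)$, and $\varphi_\tau(e) = \|\xi\|^2$, so $\tau \neq 0$ is equivalent to $\varphi_\tau \neq 0$; moreover traciality of $\tau$ amounts precisely to conjugation-invariance of $\varphi_\tau$. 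I claim the theorem follows once one knows that $\varphi_\tau$ vanishes on $G \setminus \Rad(G)$. Indeed, if $\tau \neq 0$, continuity of $\varphi_\tau$ together with $\varphi_\tau(e) > 0$ forces $\Rad(G)$ to contain a neighbourhood of $e$, hence to be open; writing $N = \Rad(G)$, the canonical conditional expectation $E_N \colon \Cstarred(G) \to \Cstarred(N)$ associated with the open subgroup $N$ satisfies $E_N(\lambda_G(f)) = \lambda_N(f|_N)$, the restriction $\varphi_\tau|_N$ defines a tracial state $\tau_N$ on $\Cstarred(N)$, and (using that $\varphi_\tau$ vanishes off the open set $N$) $\tau(\lambda_G(f)) = \int_N f\,\varphi_\tau\,d\mu_N = \tau_N\big(E_N(\lambda_G(f))\big)$, so $\tau = \tau_N \circ E_N$ factors through $E_N$; the case $\tau = 0$ is trivial.

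For the ``if'' direction, suppose $N = \Rad(G)$ is open. Since $N$ is amenable, its trivial representation is weakly contained in the regular representation, giving a character $\chi$ on $\Cstarred(N)$, which is a tracial state. As $N$ is normal, a direct computation — reducing to the identity $gh \in N \iff hg \in N$ — shows that $\chi \circ E_N$ is a tracial state on $\Cstarred(G)$; it is nonzero because $\chi(E_N(\lambda_G(f))) = \int_N f\,d\mu_N$, and it factors through $E_N$ by construction. This proves the ``if'' direction and produces the required trace.

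It remains to show that $\varphi_\tau$ vanishes off $\Rad(G)$, which is the heart of the argument and where the Furstenberg boundary enters. The locally compact group $G$ admits a universal minimal strongly proximal flow $\partial_F G$, and $\Rad(G)$ is exactly the kernel of the action $G \curvearrowright \partial_F G$: on one hand a normal amenable subgroup has an invariant probability measure $\nu$ on $\partial_F G$, every $G$-translate of $\nu$ is again invariant under that subgroup, and so strong proximality together with minimality forces every point mass to be fixed by it; on the other hand the universal minimal strongly proximal flow of the normal subgroup $\ker(G \curvearrowright \partial_F G)$ induces up to a $G$-boundary, which must be a single point because that kernel acts trivially on $\partial_F G$, so the kernel is amenable. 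One also has the Kalantar--Kennedy type embedding $\Cstarred(G) \hookrightarrow C(\partial_F G) \rtred G$. Now fix $g \notin \Rad(G)$, so that $g$ acts non-trivially on $\partial_F G$. By continuity of $\varphi_\tau$ it suffices to make $\tau(\lambda_G(f)) = \int_G f\,\varphi_\tau\,d\mu_G$ arbitrarily small for nonnegative bump functions $f$ supported in small neighbourhoods of $g$. By traciality $\tau(\lambda_G(f)) = \tau\big(\sum_i c_i\,\lambda_G(s_i)\lambda_G(f)\lambda_G(s_i)^*\big)$ for any finite convex combination over $s_i \in G$, hence $|\tau(\lambda_G(f))| \le \big\| \sum_i c_i\,\lambda_G(s_i)\lambda_G(f)\lambda_G(s_i)^* \big\|$, and — computing this norm inside $C(\partial_F G) \rtred G$ — one is reduced to a Powers-type averaging statement: since the conjugates $s_i g s_i^{-1}$ act non-trivially on $\partial_F G$, strong proximality and minimality allow one to choose the $s_i$ so that the corresponding elements become essentially disjoint over the boundary and the average is driven to small norm. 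This is the mechanism by which Breuillard, Kalantar, Kennedy and Ozawa establish the unique trace property for discrete groups with trivial amenable radical, here carried out relative to $\Rad(G)$.

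The main obstacle is precisely this averaging step in the locally compact setting. One must have available the boundary machinery for locally compact $G$ — existence and the needed properties of $\partial_F G$, the identification $\Rad(G) = \ker(G \curvearrowright \partial_F G)$, and the embedding into $C(\partial_F G) \rtred G$ — and one must then run the strong-proximality averaging with bump functions in place of single group elements, controlling the crossed-product norms uniformly over compact supports and keeping track of the modular functions throughout. By contrast, the reduction of the theorem to the vanishing assertion, and the ``if'' direction, are comparatively soft once the normality and openness of $\Rad(G)$ are used.
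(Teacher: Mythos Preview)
Your reduction to the vanishing of the positive-definite function $\varphi_\tau$ on $G \setminus \Rad(G)$ is correct, and the deductions you draw from it --- openness of $\Rad(G)$ via continuity of $\varphi_\tau$ at $e$, and the factorisation $\tau = \tau_N \circ E_N$ --- are essentially the paper's argument (the paper phrases the forward direction with a Dirac net supported off $\Rad(G)$, but your version is equivalent). Your ``if'' direction via $\chi \circ E_N$ is likewise fine and parallels the paper's use of the trace on $\grpvn(G/\Rad(G))$.

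The real divergence is in the vanishing step, and here you have taken the hard road and left it unfinished. The paper does \emph{not} use Powers-type averaging. It extends the (normalised) trace to the multiplier algebra $\rM(\Cstarred(G))$ and then, by $G$-injectivity of $\cont(\fb G)$, to a $G$-equivariant unital completely positive map $\varphi \colon \rM(\cont(\fb G)\redtimes G) \to \cont(\fb G)$. Since $\cont(\fb G)$ lies in the multiplicative domain of $\varphi$, for $\gamma \notin \Rad(G)$ one chooses $x \in \fb G$ with $\gamma x \neq x$ and $\psi \in \cont(\fb G)$ with $\psi(x)=1$, $\psi(\gamma x)=0$, and the one-line computation
\[
\varphi(u_\gamma)(x) = \psi(x)\,\varphi(u_\gamma)(x) = \varphi(\psi u_\gamma)(x) = \varphi(u_\gamma \psi^\gamma)(x) = \varphi(u_\gamma)(x)\,\psi(\gamma x) = 0
\]
kills $\varphi(u_\gamma)$. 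Then $\tau(f) = \int_G f(\gamma)\,\varphi(u_\gamma)\,d\gamma = 0$ for $f \in \contc(G)$ supported off $\Rad(G)$, by strict continuity of $\varphi$. This bypasses entirely the obstacle you flag: no crossed-product norm estimates, no bump-function uniformity, no modular bookkeeping.

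Your attribution is also off: this multiplicative-domain argument \emph{is} the mechanism of \cite[Theorem~4.1]{breuillardkalantarkennedyozawa14} for traces; Powers averaging appears elsewhere in the C*-simplicity literature but is not how BKKO handle the trace question. So the paper's proof is exactly the locally compact adaptation of the BKKO argument, with the only genuinely new ingredients being the passage to multipliers and the strict-continuity step needed to integrate against $\contc(G)$.
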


We view Theorem \ref{thm:main} as the natural generalization to locally compact groups of \cite[Theorem~4.1]{breuillardkalantarkennedyozawa14}, which states that every trace on the reduced C*-algebra of a discrete group concentrates on the amenable radical.

Our approach to the proof is much different than the approach taken in \cite{forrestspronkwiersma17}. We are motivated by the perspective introduced in \cite{kalantarkennedy14-boudaries}, which relates the structure of the reduced group C*-algebra of a discrete group to the dynamics of the topological Furstenberg boundary. In the present setting, it is also necessary to handle the technical difficulties that arise for non-discrete groups.

Theorem \ref{thm:main} immediately yields a characterization of locally compact groups that admit finite weakly regular unitary representations. Recall that a representation is weakly regular if it is weakly contained in the left regular representation.

\begin{corollary} \label{cor:cor}
A locally compact group admits a finite weakly regular representation if and only if its amenable radical is open.
\end{corollary}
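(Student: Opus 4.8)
The strategy is to combine Theorem \ref{thm:main} with the elementary correspondence, furnished by the GNS construction, between non-zero traces on a C*-algebra and representations that generate a finite von Neumann algebra. Throughout, recall that a unitary representation of $G$ is weakly contained in the left regular representation exactly when its integrated form factors through a non-degenerate representation of $\Cstarred(G)$, that such a representation is \emph{finite} when the von Neumann algebra it generates is finite, and that this von Neumann algebra coincides with the one generated by the corresponding representation of $\Cstarred(G)$. So the corollary amounts to the assertion that $\Cstarred(G)$ admits a non-zero trace if and only if it admits a non-degenerate representation generating a non-zero finite von Neumann algebra, which we then feed into Theorem \ref{thm:main}.

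For the implication that an open amenable radical produces a finite weakly regular representation, I would first invoke Theorem \ref{thm:main} to obtain a non-zero trace $\tau$ on $\Cstarred(G)$. Its GNS construction yields a non-degenerate representation $\pi_\tau$ on a Hilbert space $H_\tau$ with a cyclic vector $\xi_\tau$ satisfying $\langle \pi_\tau(a)\xi_\tau, \xi_\tau\rangle = \tau(a)$. Since $\tau$ is tracial and $\pi_\tau(\Cstarred(G))$ is $\sigma$-weakly dense in $M := \pi_\tau(\Cstarred(G))''$, the normal functional $\omega_{\xi_\tau}$ is a trace on $M$; and $\xi_\tau$ is separating for $M$ — equivalently cyclic for $M'$ — because the canonical conjugation attached to the GNS construction of a trace fixes $\xi_\tau$ and carries $M$ onto $M'$. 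Hence $\omega_{\xi_\tau}$ is a faithful normal tracial state, so $M$ is finite, and the weakly regular representation of $G$ determined by $\pi_\tau$ generates $M$ and is therefore finite.

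For the converse, suppose $\pi$ is a finite weakly regular representation of $G$. It corresponds to a non-degenerate representation $\bar\pi$ of $\Cstarred(G)$ with $M := \bar\pi(\Cstarred(G))'' = \pi(G)''$ a non-zero finite von Neumann algebra. A non-zero finite von Neumann algebra carries a faithful family of normal tracial states, so there is a normal tracial state $\tau_M$ on $M$ that does not vanish identically on $\bar\pi(\Cstarred(G))$; then $\tau := \tau_M \circ \bar\pi$ is a non-zero trace on $\Cstarred(G)$, and Theorem \ref{thm:main} forces $\Rad(G)$ to be open. This closes the loop.

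I expect no serious obstacle: essentially all the content is carried by Theorem \ref{thm:main}, and what remains is routine operator-algebraic bookkeeping. The only points warranting a line of care — and hence the mildest of obstacles — are running the GNS construction when $\Cstarred(G)$ is non-unital (so that $\pi_\tau$ is genuinely non-degenerate, which follows since $A^2$ is dense in $A = \Cstarred(G)$), verifying the separating property of the GNS vector of a trace, and, in the converse direction, using the existence of a faithful family of normal traces on a finite von Neumann algebra — which need not have separable predual — to extract a trace that survives composition with $\bar\pi$.
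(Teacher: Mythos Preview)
Your argument is correct and is essentially the approach the paper has in mind: the corollary is stated as an immediate consequence of Theorem~\ref{thm:main}, via the standard correspondence between non-zero traces and finite weakly regular representations that you spell out. Note only that the proof of Theorem~\ref{thm:main} already exhibits an explicit finite weakly regular representation when $\Rad(G)$ is open---the quasi-regular representation on $\ltwo(G/\Rad(G))$---so the GNS step in your forward direction, while perfectly valid, is not strictly needed.
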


Corollary \ref{cor:cor} can be seen as an analogue of a classical result of Kadison and Singer \cite[Corollary~3]{kadisonsinger52} which characterizes the connected locally compact groups without any finite representation.

\section*{Acknowledgement}
We are grateful to Brian Forrest, Nico Spronk and Matthew Wiersma for sharing a preprint of their work \cite{forrestspronkwiersma17}. 

\section*{Proof of Theorem \ref{thm:main}}
\label{sec:proof}

We first prove a generalization to locally compact groups of \cite[Theorem 4.1]{breuillardkalantarkennedyozawa14}.

\begin{lemma}
  \label{lem:control-traces}
Let $G$ be a locally compact group. Every trace $\tau$ on $\Cstarred(G)$ satisfies $\tau(f) = 0$ for every function $f \in \contc(G)$ with support disjoint from the amenable radical $\Rad(G)$.
\end{lemma}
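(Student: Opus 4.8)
The plan is to transplant the Furstenberg‑boundary approach of \cite{breuillardkalantarkennedyozawa14}, developed in \cite{kalantarkennedy14-boudaries}, to the locally compact setting. The geometric object is the Furstenberg boundary $\fb G$, the universal minimal strongly proximal $G$-flow, which exists for any locally compact group. Two facts about it are needed. First, any closed amenable normal subgroup $N$ of $G$ acts trivially on $\fb G$: by amenability $N$ fixes some $\mu \in \mathrm{Prob}(\fb G)$; since $N$ is normal, the $G$-orbit closure of $\mu$ consists of $N$-invariant measures, and by strong proximality it contains a point mass $\delta_x$, so $x$ is an $N$-fixed point; the set of $N$-fixed points being closed, $G$-invariant and non-empty, minimality forces it to be all of $\fb G$. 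In particular $\Rad(G)$ acts trivially, and, conversely, it is known that the kernel of $G \to \mathrm{Homeo}(\fb G)$ is precisely $\Rad(G)$. Second, one needs the locally compact analogue of the identification of $\cont(\fb G)$ with the $G$-injective envelope of $\bC$, so that every unital $G$-equivariant completely positive map from $\bC$ into a $G$-injective operator system factors through $\cont(\fb G)$; carrying this machinery over to the topological‑group category — with the correct continuity conditions on $G$-actions on operator systems — is the first technical point.

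Now fix a trace $\tau$ on $\Cstarred(G)$ and $f \in \contc(G)$ with $\supp(f) \cap \Rad(G) = \emptyset$; we want $\tau(f) = 0$. Pass to the GNS triple $(\pi_\tau, \cH_\tau, \xi_\tau)$. Traciality of $\tau$ makes $M := \pi_\tau(\Cstarred(G))''$ a finite von Neumann algebra with faithful normal tracial state $\wh\tau = \langle\,\cdot\,\xi_\tau,\xi_\tau\rangle$ and with $\xi_\tau$ a cyclic and separating trace vector, so that $\pi_\tau(\lambda_g)^*\xi_\tau = J\pi_\tau(\lambda_g)J\,\xi_\tau$ with $J\pi_\tau(\lambda_g)J \in M'$, $J$ the modular conjugation. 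The group acts unitarily on $\cH_\tau$ by $g \mapsto \pi_\tau(\lambda_g) \in M$, so $\cB(\cH_\tau)$, with the action $\Ad\pi_\tau(\lambda_g)$, is a $G$-injective $G$-C*-algebra containing $M$ as a $G$-invariant subalgebra carrying the $G$-invariant trace $\wh\tau$. By $G$-injectivity of $\cB(\cH_\tau)$ and the universal property of $\fb G$, there is a $G$-equivariant unital completely positive map $\theta\colon \cont(\fb G) \to \cB(\cH_\tau)$. Together with $\pi_\tau$ it forms a covariant pair, which — using the canonical isometric embedding $\Cstarred(G) \hookrightarrow \cont(\fb G) \rtred G$ — integrates to a completely positive map $\Theta\colon \cont(\fb G) \rtred G \to \cB(\cH_\tau)$ extending $\pi_\tau$; the state $\wt\tau = \langle \Theta(\,\cdot\,)\xi_\tau,\xi_\tau\rangle$ on $\cont(\fb G) \rtred G$ then restricts to $\tau$.

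The heart of the proof is to play the finiteness of $M$ (equivalently, the traciality of $\tau$ and the symmetry above) against the rigidity of the boundary action, whose kernel is exactly $\Rad(G)$, to conclude $\wt\tau(f) = 0$; this is the step generalizing \cite[Theorem~4.1]{breuillardkalantarkennedyozawa14}. Since $\supp(f)$ misses $\Rad(G)$, each point of it acts non-trivially on $\fb G$; after localizing so that $f$ is supported in a small neighbourhood of a single point $g_0 \notin \Rad(G)$, one uses minimality and strong proximality of $G \curvearrowright \fb G$ to choose group elements $h_1,\dots,h_n$ for which the conjugates $\pi_\tau(\lambda_{h_i}) f \pi_\tau(\lambda_{h_i})^*$ decorrelate in the GNS picture, so that their average has trace close to $0$; as $\tau$ is unchanged under these conjugations, this forces $\tau(f)=0$. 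Equivalently, the boundary provides a vanishing statement for the ``off‑diagonal'' part of $f$ inside $\cont(\fb G) \rtred G$, which is transported back along $\wt\tau$.

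I expect the main obstacle to be exactly this last rigidity step together with its passage to non‑discrete groups. In the discrete case one has a canonical trace — equivalently the conditional expectation $\Cstarred(G)\to\bC$ — and a genuine ``diagonal'' decomposition of the group algebra indexed by group elements, and the argument operates on a single unitary $\lambda_{g_0}$; for general locally compact $G$ none of this is available: evaluation at the identity is unbounded on $\Cstarred(G)$, there is no canonical finite trace, and $\contc(G)$ is an algebra of integrals of group elements rather than of finitely supported sums. The work is in recasting the averaging/concentration argument so that it lives inside $\cont(\fb G) \rtred G$ and uses only the bounded conditional expectation onto $\cont(\fb G)$, while handling the continuum of group elements in $\supp(f)$ and the continuity of $g \mapsto \pi_\tau(\lambda_g)$ needed to make the covariant extension $\Theta$ legitimate; the dynamical input (minimality and strong proximality, to produce the required group elements) is then essentially as in \cite{breuillardkalantarkennedyozawa14}.
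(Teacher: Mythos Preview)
Your setup is in the right spirit, but the ``heart of the proof'' you describe is not actually an argument, and it is not what \cite{breuillardkalantarkennedyozawa14} does. In BKKO there is no Powers-type averaging of conjugates $\lambda_{h_i} f \lambda_{h_i}^{-1}$; the vanishing $\tau(\lambda_g)=0$ for $g\notin\Rad(G)$ is obtained by a one-line multiplicative-domain computation inside a $G$-equivariant ucp extension $\vphi$ of $\tau$ with values in $\cont(\fb G)$: since $\cont(\fb G)$ lies in the multiplicative domain of $\vphi$, for $\psi\in\cont(\fb G)$ with $\psi(x)=1$ and $\psi(\gamma x)=0$ one gets $\vphi(u_\gamma)(x)=\psi(x)\vphi(u_\gamma)(x)=\vphi(\psi u_\gamma)(x)=\vphi(u_\gamma\psi^\gamma)(x)=\vphi(u_\gamma)(x)\psi(\gamma x)=0$. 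Your proposed ``decorrelation'' of the conjugates under $\wt\tau$ has no stated mechanism: traciality makes the trace of each conjugate equal to $\tau(f)$, so averaging does nothing unless you can bound the average by something unrelated to $\tau$, and you never say what that bound is.

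The paper avoids all of the difficulties you flag in your last paragraph by going in the opposite direction on both counts. First, rather than using $G$-injectivity of $\cB(\cH_\tau)$ to map $\cont(\fb G)$ \emph{into} the GNS picture, it uses $G$-injectivity of $\cont(\fb G)$ to extend $\tau$ itself to a $G$-equivariant ucp map $\vphi$ with values \emph{in} $\cont(\fb G)$; this is what makes the multiplicative-domain trick available. Second, and this is the crucial point for non-discrete $G$, it first extends $\tau$ (and then $\vphi$) to the multiplier algebra $\rM(\Cstarred(G))$, respectively $\rM(\cont(\fb G)\rtred G)$. This gives access to the individual unitaries $u_\gamma$, so one can run the BKKO computation verbatim to get $\vphi(u_\gamma)=0$ for every $\gamma\notin\Rad(G)$; then for $f\in\contc(G)$ supported off $\Rad(G)$ one simply writes $\tau(f)=\int_G f(\gamma)\,\vphi(u_\gamma)\,\rmd\gamma=0$ using strict continuity of $\vphi$. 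No localization, no averaging, and no need to wrestle with the absence of a canonical trace: the passage to multipliers is exactly what replaces the ``diagonal decomposition'' you are missing.
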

\begin{proof}
  Let $\tau: \Cstarred(G) \ra \CC$ be a trace. We continue to denote by $\tau$ the unique extension of $\tau$ to a trace on the multiplier algebra $\rM(\Cstarred(G))$. By normalizing $\tau$, we can assume that it is unital. The fact that $\tau$ it is tracial implies that it is $G$-equivariant. Hence by the $G$-injectivity of $\cont(\fb G)$, we can $\tau$ to a $G$-equivariant unital completely positive map $\vphi: \rM(\cont(\fb G) \redtimes G) \ra \cont(\fb G)$.

Proceeding as in \cite{breuillardkalantarkennedyozawa14}, we now show that for $\gamma \in G \setminus \Rad(G)$, $\vphi(u_\gamma) = 0$. By \cite[Proposition 7]{furman03}, $\gamma$ acts non-trivially on $\fb G$, so there is $x \in \fb G$ such that $\gamma x \neq x$.  Let $\psi \in \cont(\fb G)$ be any function satisfying $\psi(x) = 1$ and $\psi(\gamma x) = 0$. Then
  \begin{equation*}
    \vphi(u_\gamma)
    =
    \psi(x) \vphi(u_\gamma)
    =
    (\vphi(\psi) \vphi(u_\gamma)) (x)
    =
    \vphi(\psi u_\gamma)(x)
    =
    \vphi(u_\gamma \psi^\gamma )(x)
    =
    \vphi(u_\gamma) \psi(\gamma x)
    =
    0
    \eqstop
  \end{equation*}
So if $f \in \contc(G) \subset \Cstarred(G)$ has its support disjoint from $\Rad(G)$, then we obtain
  \begin{equation*}
    \tau(f) = \int_G f(\gamma) \vphi(u_\gamma) \rmd g = 0
    \eqcomma
  \end{equation*}
  by the strict continuity of $\vphi$.
\end{proof}

\begin{proof}[Proof of Theorem \ref{thm:main}]
Assume that the amenable radical of $G$ is not open and $\tau$ is a trace on $\Cstarred(G)$. Let $\cN$ be a the filter of open neighbourhoods of $e \in G$.  Because $\Rad(G)$ is not open, it does not contain any $U \in \cN$.  So for every $U \in \cN$ there is a positive function $f_U \in \contc(G)$ with support in the non-trivial open set $U \cap \Rad(G)^\rmc$  satisfying $\int_G f = 1$.
  
 The net $(f_U)_{U \in \cN}$ is a Dirac net for $G$ and hence an approximate identity for $\Cstarred(G)$.  Since $\supp f_U \cap \Rad(G) = \emptyset$ for all $U \in \cN$, we obtain $\tau(f_U) = 0$ from the lemma. Since $\{x \in \Cstarred(G) \mid \tau(x^*x) = 0\}$ is an ideal containing the approximate identity $(f_U)_{U \in \cN}$, it follows that $\tau \equiv 0$.

Conversely, assume that the amenable radical $\Rad(G)$ of $G$ is open.  Since $\Rad(G)$ is amenable, the left regular representation of $G/ \Rad(G)$ on $\ltwo(G/\Rad(G))$ provides us with a *-representation of $\Cstarred(G)$, since it is weakly contained in the left regular representation of $G$. Its image generates the group von Neumann algebra $\rL(G/\Rad(G)) \subset \bo(\ltwo(G/\Rad(G))$. This von Neumann algebra is finite, since the openness of $\Rad(G)$ implies the discreteness of $G/\Rad(G)$. We obtain a trace on $\Cstarred(G)$ by composing the representation on $\ltwo(G/\Rad(G))$ with the trace on $\rL(G/\Rad(G))$.
 
Finally, for the last statement of the theorem, let $\tau$ be any trace on $\Cstarred(G)$.  Let $\rE: \Cstarred(G) \ra \Cstarred(\Rad(G))$ denote the natural conditional expectation obtained from the restriction $\contc(G) \ra \contc(\Rad(G))$.  For $f \in \contc(G)$, the lemma gives
  \begin{equation*}
    \tau(f)
    =
    \tau(\mathbb 1_{\Rad(G)} f) + \tau(\mathbb 1_{G \setminus \Rad(G)} f)
    =
    \tau(\mathbb 1_{\Rad(G)} f)
    =
    \tau \circ \rE(f)
    \eqstop
  \end{equation*}
  Thus $\tau|_{\contc(G)} = \tau \circ \rE|_{\contc(G)}$.  Since $\contc(G) \subset \Cstarred(G)$ is dense, and since $\tau$ and $\tau \circ \rE$ are continuous, it follows that $\tau = \tau \circ \rE$.
\end{proof}



\printbibliography


\vspace{2em}
{\small \parbox[t]{200pt}
  {
    Sven Raum \\
    EPFL SB SMA \\
    Station 8 \\
    CH-1015 Lausanne \\
    Switzerland \\
    {\footnotesize sven.raum@epfl.ch}
  }
}
\hspace{15pt}
{\small \parbox[t]{200pt}
  {
    Matthew Kennedy \\
    Department of Pure Mathematics \\
    University of Waterloo \\
    Waterloo, ON, N2L 3G1 \\
    Canada \\
    {\footnotesize matt.kennedy@uwaterloo.ca}
  }
}

\end{document}